\newtheorem{theorem}{Theorem}
\newtheorem{corollary}{Corollary}
\newtheorem{lemma}{Lemma}
\newtheorem{definition}{Definition}
\def\qed{\begin{flushright} $\Box$ \end{flushright}}
\def\Dbar{\leavevmode\lower.6ex\hbox to 0pt{\hskip-.23ex \accent"16\hss}D}
\def\bZ{{\mbox{\bf Z}}}
\def\bC{{\mbox{\bf C}}}
\def\bT{{\mbox{\bf T}}}
\def\hG{{\hat{G}}}
\def\paf{{\mbox{\rm PAF}}}
\def\psd{{\mbox{\rm PSD}}}
\def\dft{{\mbox{\rm DFT}}}
\def\Mat{{\mbox{\rm Mat}}}
\newcommand{\nc}{\newcommand}
\nc{\cP}{{\cal P}}
\nc{\cR}{{\cal R}}
\begin{document}

{\bf\LARGE
\begin{center}
Algorithms for difference families in finite abelian groups
\end{center}
}

{\Large
\begin{center}
Dragomir {\v{Z}}. {\Dbar}okovi{\'c}\footnote{University of Waterloo,
Department of Pure Mathematics and Institute for Quantum 
Computing, Waterloo, Ontario, N2L 3G1, Canada
e-mail: \url{djokovic@math.uwaterloo.ca}}, Ilias S.
Kotsireas\footnote{Wilfrid Laurier University, Department of Physics
\& Computer Science, Waterloo, Ontario, N2L 3C5, Canada, e-mail:
\url{ikotsire@wlu.ca}}
\end{center}
}

\begin{abstract}
Our main objective is to show that the computational methods that we previously developed to search for difference families in cyclic groups can be fully  extended to the more general case of arbitrary finite abelian groups. In particular the power density $(\psd)$-test and the method of compression can be used to help the search.
\end{abstract}

{\em Keywords:}
Difference families, discrete Fourier transform, periodic autocorrelation, power spectral density, Goethals--Seidel array.

2010 Mathematics Subject Classification: 05B10, 05B20.

\section{Difference families}

Let $G$ be a finite abelian group of order $v$. We write it 
multiplicatively and denote its identity element by $e$. 

\begin{definition} \label{def:diff-fam}
We say that an ordered $t$-tuple $(X_1,X_2,\ldots,X_t)$, 
$t\ge1$, of proper nonempty subsets of $G$ is a 
{\em difference family} if the sets
$S_a=\{(x,ax,i):x,ax\in X_i,i=1,2,\ldots,t\}$ with  
$a\in G\setminus\{e\}$ have the same cardinality. 
In that case we denote this cardinality by $\lambda$ and we refer to
\begin{equation} \label{eq:param}
(v;k_1,k_2,\ldots,k_t;\lambda),
\end{equation}
where $k_i=|X_i|$, as the {\em parameter set} of this 
difference family.
\end{definition}

A simple counting argument shows that these parameters must 
satisfy the equation

\begin{equation} \label{eq:lambda}
\sum_{i=1}^t k_i(k_i-1)=\lambda(v-1).
\end{equation}

We shall also need an additional parameter, $n$, defined 
by the equation
\begin{equation} \label{eq:par-n}
n=\sum_{i=1}^t k_i-\lambda.
\end{equation}

Let us assume that $(X_1,X_2,\ldots,X_t)$ is a difference 
family in $G$ with the parameters displayed above. Then 
we also say that the sets $X_i$ are its {\em base blocks}.
There are two simple types of transformations which we can perform on the difference families in $G$ with the fixed 
index set $\{1,2,\ldots,t\}$ and the fixed value of the 
parameter $n$.

For the first type let $\pi$ be a permutation of the index set 
$\{1,2,\ldots,t\}$. Then the $t$-tuple 
$(X_{\pi 1},X_{\pi 2},\ldots,X_{\pi t})$ 
is also a difference family. Its parameter set is obtained 
from (\ref{eq:param}) by substituting $\pi k_i$ for $k_i$ 
for each index $i$. The parameter $\lambda$ does not change.

For the second type we select an index, say $j$, and replace 
$X_j$ with its complement $G\setminus X_j$. We obtain again 
a difference family. Its parameter set is obtained from 
(\ref{eq:param}) by substituting $v-k_j$ for $k_j$ and 
$\lambda+v-2k_j$ for $\lambda$.

By performing a finitely many transformations of the two 
types described above, we can replace the original difference family with one whose parameter set satisfies the following
additional conditions
\begin{equation} \label{uslovi}
v/2 \ge k_1\ge k_2\ge \cdots \ge k_t \ge 1.
\end{equation}
Note that this implies that $v \ge 2$.

\section{Difference families and the group algebra}
\label{sec:groupalg}

Let $\cR$ be the group algebra of $G$ over the complex numbers, 
$\bC$. The elements of $\cR$ are formal linear combinations $\sum_{x\in G} c_x x$, with complex coefficients $c_x\in\bC$. Thus $G$ is a vector space basis of $\cR$, and $\cR$ has dimension $v$. The linear map $\varepsilon:\cR\to\bC$ 
such that $\varepsilon(x)=1$ for all $x\in G$ is an algebra homomorphism known as the {\em augmentation}. Note that
$$
\varepsilon(\sum_{x\in G} c_x x)=\sum_{x\in G} c_x.
$$

The algebra $\cR$ has an involution, ``$*$'', which acts on the scalars as the complex conjugation and acts 
as the inversion map on group elements. Thus
$$
\left( \sum_{x\in G} c_x x\right)^* =
\sum_{x\in G} \bar{c}_x x^{-1}, \quad c_x\in\bC.
$$

For any subset $X\subseteq G$, by abuse of notation, we also 
denote by $X$ the sum of all elements of $X$ in $\cR$. It will be clear from the context which of these two meanings is used. 
For instance, we have 
$$
X=\sum_{x\in X} x.
$$
The $X$ on the lefthand side is an element of $\cR$, while on the 
righthand side it is a subset of $G$.

In section \ref{sec:Classes} we shall use the symmetric and 
skew subsets of $G$. We define them as follows. 
A subset $X\subseteq G$ is {\em symmetric} if $X^*=X$. 
A subset $Y\subseteq G$ is {\em skew} if $G$ is a disjoint 
union of $Y$, $Y^*$ and $\{e\}$.

For any element $X\in \cR$ we define its {\em norm}, $N(X)$, 
by $N(X)=XX^*$. 
The proof of the following lemma is straightforward and we omit it.
\begin{lemma} \label{le:folk}
Let $(X_1,X_2,\ldots,X_t)$ be a $t$-tuple of proper nonempty 
subsets of $G$ with cardinalities $k_i=|X_i|$, and let $\lambda$ be a nonnegative integer. Then $(X_1,X_2,\ldots,X_t)$ is a difference family in $G$ with the parameter set 
$(v;k_1,k_2,\ldots,k_t;\lambda)$ if and only if 
\begin{equation} \label{eq:zbirNi}
\sum_{i=1}^t N(X_i)=n\cdot e+\lambda G,
\end{equation}
where $n$ is defined by (\ref{eq:par-n}).
\end{lemma}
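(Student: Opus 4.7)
The plan is to expand both sides of (\ref{eq:zbirNi}) in the natural basis $G$ of $\cR$ and compare coefficients element by element. Since $X_i = \sum_{x\in X_i} x$ and $X_i^* = \sum_{y\in X_i} y^{-1}$, we have
$$
N(X_i) = X_i X_i^* = \sum_{x,y \in X_i} x y^{-1},
$$
so the coefficient of a group element $a$ in $N(X_i)$ equals the number of pairs $(x,y)\in X_i\times X_i$ with $xy^{-1}=a$. Setting $x=ay$, this coefficient equals $|\{y\in X_i : ay\in X_i\}|$.

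The proof then splits into inspecting two cases of the coefficient of $a$ in $\sum_{i=1}^t N(X_i)$. First, for $a=e$ the condition $xy^{-1}=e$ forces $x=y$, so the coefficient of $e$ in $N(X_i)$ is $|X_i|=k_i$, and hence the coefficient of $e$ in $\sum_i N(X_i)$ is $\sum_i k_i$. Second, for $a\neq e$ the coefficient of $a$ in $\sum_i N(X_i)$ is exactly $\sum_i |\{x\in X_i : ax \in X_i\}|$, which by the definition of $S_a$ in Definition~\ref{def:diff-fam} equals $|S_a|$.

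On the other hand, the right-hand side $n\cdot e + \lambda G$ has coefficient $n+\lambda$ at $e$ and coefficient $\lambda$ at every $a\neq e$. Using (\ref{eq:par-n}), the identity $\sum_i k_i = n+\lambda$ is automatic, so the coefficient of $e$ always matches. Therefore (\ref{eq:zbirNi}) holds if and only if $|S_a|=\lambda$ for every $a\in G\setminus\{e\}$, which is precisely the condition that $(X_1,\ldots,X_t)$ be a difference family with $\lambda$-value $\lambda$.

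There is no real obstacle; the only thing to be careful about is correctly translating the combinatorial counting of $S_a$ into the coefficient extraction in $\cR$, and keeping track of the $e$-coefficient separately so that the identity $n=\sum_i k_i - \lambda$ is visibly consistent with the statement rather than an extra hypothesis.
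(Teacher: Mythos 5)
Your proof is correct. The paper itself omits the proof of this lemma as ``straightforward,'' and your argument --- expanding $N(X_i)=\sum_{x,y\in X_i}xy^{-1}$, reading off the coefficient of $a$ as $|\{y\in X_i: ay\in X_i\}|$, and comparing with $n\cdot e+\lambda G$ while noting that the $e$-coefficient identity $\sum_i k_i=n+\lambda$ is exactly (\ref{eq:par-n}) --- is precisely the routine coefficient comparison the authors intended.
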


To any function $f:G\to\bC$ we assign an element, $a_f\in \cR$, 
by seting
$$a_f=\sum_{x\in G} f(x)x.$$
The conjugate $\bar{f}$ of $f$ is defined by 
$\bar{f}(x)=\overline{f(x)}$, $x\in G$.
 
The {\em periodic autocorrelation function} $\paf_f$ of $f$ is 
defined by
\begin{equation} \label{def:paf}
\paf_f(x)=\sum_{y\in G} f(xy)\bar{f}(y),\quad x\in G.
\end{equation}
We claim that
\begin{equation} \label{eq:paf-norm}
N(a_f)=\sum_{x\in G} \paf_f(x)x.
\end{equation}
Indeed, this follows from the identities
\begin{eqnarray*}
N(a_f) &=& N\left( \sum_{x\in G} f(x)x \right) \\
&=& \sum_{x,y\in G} f(x)\bar{f}(y)xy^{-1} \\
&=& \sum_{z\in G}\left( \sum_{y\in G}f(yz)\bar{f}(y)\right)z \\
&=& \sum_{z\in G} \paf_f(z)z.
\end{eqnarray*}

\section{Characters of finite abelian groups}

In this section we recall some well known facts. For more
details see for instance \cite{AT,BL}.

Let $L^2(G)$ denote the finite-dimensional complex Hilbert 
space of all complex-valued functions on $G$ with the 
inner product
\begin{equation} \label{eq:inn-prod}
\langle f,g \rangle=\sum_{x\in G} f(x)\bar{g}(x).
\end{equation}
For $a\in G$ let $\delta_a:G\to\bC$ be the function 
defined by $\delta_a(x)=1$ if $x=a$ and $\delta_a(x)=0$ 
otherwise. These delta functions form an orthonormal 
(o.n.) basis of $L^2(G)$.

A {\em character} $\chi$ of $G$ is a group homomorphism 
$G\to \bT$, where $\bT=\{z\in\bC:|z|=1\}$ is the circle group, i.e., the group of complex numbers of modulus 1 (with multiplication as the group operation). If $\chi$ and $\psi$ are two characters of $G$, then their product $\chi\psi$ defined, as usual, by $\chi\psi(x)=\chi(x)\psi(x)$, $x\in G$, is also a character of $G$. It follows easily that the characters of $G$ form an abelian group, called the {\em dual group} of $G$, which we denote by $\hG$. Its identity element is the {\em trivial character}, $\theta$, which maps all elements $x\in G$ to $1\in\bT$. The group $\hG$ is finite, and it has the same order as 
$G$. Moreover, these two groups are isomorphic, $\hG\cong G$.

If $\alpha:H\to G$ is a homomorpism of finite abelian groups
and $\chi$ is a character of $G$ then the composite 
$\chi\circ\alpha:H\to\bT$ is a character of $H$. 
In particular, the map $\alpha:G\to G$ defined by 
$\alpha(x)=x^{-1}$, $x\in G$, is an automorphism of $G$
and we have $\chi\circ\alpha(x)=\chi(x^{-1})=\chi(x)^{-1}=\bar{\chi}(x)$. Thus if $\chi$ is a character of $G$ then 
its complex conjugate $\bar{\chi}$ is also a character of $G$.

We claim that if $\chi$ is a nontrivial character of $G$ then
\begin{equation} \label{eq:sum-x}
\sum_{x\in G} \chi(x)=0, \quad \chi\ne\theta.
\end{equation}
Indeed, there exists $a\in G$ such that $\chi(a)\ne1$. Then 
$\chi(a)\sum_{x\in G} \chi(x)=\sum_{x\in G} \chi(ax)=\sum_{x\in G} \chi(x)$ and our claim follows. 

Next we claim that if $\chi$ and $\psi$ are different characters 
of $G$ then they are orthogonal, i.e., 
\begin{equation} \label{eq:ortog}
\langle \chi,\psi \rangle=0,\quad \chi\ne\psi. 
\end{equation}
Indeed, we have 
$\langle \chi,\psi \rangle=\sum_x \chi(x)\bar{\psi}(x)
=\sum_x \chi\bar{\psi}(x)=0$ since the character 
$\chi\bar{\psi}$ is nontrivial.

For $\chi\in\hG$ we have $\langle \chi,\chi \rangle=
\sum_{x\in G} \chi(x)\bar{\chi}(x)=\sum_{x\in G} 1=|G|=v$.
Thus the functions $\chi/\sqrt{v}$, with $\chi\in\hG$, form 
another o.n. basis of $L^2(G)$.

\section{Discrete Fourier transform, \dft} \label{sec:dft}

The {\em discrete Fourier transform} of a function $f\in L^2(G)$ is the function $\dft_f$, also written as $\dft(f)$, in 
$L^2(\hG)$ defined by the formula

\begin{equation} \label{def:dft}
\dft_f(\chi)=\sum_{x\in G} f(x)\bar{\chi}(x)
=\langle f,\chi \rangle, \quad \chi\in\hG.
\end{equation}

Thus we have a linear map $\dft:L^2(G)\to L^2(\hG)$ to which we 
refer as the {\em discrete Fourier transform} on $G$. For the proofs of the following four basic properties of this transform we again refer to \cite{AT}.

(a) $\dft$ is a vector space isomorphism.

(b) $\dft_{f*g}=\dft_f\cdot\dft_g$ where $f*g$ is the {\em convolution} of $f$ and $g$ defined by the formula
$$ f*g\;(x)=\sum_{y\in G} f(y)g(xy^{-1}). $$

(c) For $f\in L^2(G)$, 
$$f=\frac{1}{v} \sum_{\chi\in\hG}\dft_f(\chi)\chi.$$

(d) Define the inner product in $L^2(\hG)$ by 
$\langle H,K \rangle= \sum_{\chi\in\hG} H(\chi)\bar{K}(\chi)$. 
Then
$$\|\dft_f\|^2=v\|f\|^2,\quad f\in L^2(G).$$


The property (c) is known as the {\em inversion formula} as 
it shows how to recover the function $f$ from its Fourier 
transform $\dft_f$. It follows from (c) that the {\em inverse 
Fourier transform}, $\dft^{-1}$, is given by the formula

\begin{equation} \label{def:dft-inv}
\dft^{-1}(\phi)= \frac{1}{v} \sum_{\chi\in\hG}\phi(\chi)\chi,
\quad \phi\in L^2(\hG).
\end{equation}

The property (d) shows that the linear map 
$(1/\sqrt{v})\dft: L^2(G)\to L^2(\hG)$ is an isometry.

As an example let us compute the $\dft$ of the cyclic group 
$G=\langle g \rangle$ of order $v$ with a generator $g$. 
For each $j\in\{0,1,\ldots,v-1\}$ there is a unique character 
$\chi_j$ of $G$ such that $\chi_j(g)=\omega^j$, where 
$\omega=e^{2\pi i/v}$. Hence we have 
$\hat{G}=\{\chi_j:j=0,1,\ldots,v-1\}$.
Note that the trivial character is $\theta=\chi_0$.
Let $f\in L^2(G)$ be arbitrary. Then
\begin{eqnarray*}
\dft_f(\chi_j) &=& \langle f,\chi_j \rangle \\
&=& \sum_{k=0}^{v-1} f(g^k)\bar{\chi}_j(g^k) \\
&=& \sum_{k=0}^{v-1} f(g^k)\overline{\omega^{jk}} \\
&=& \sum_{k=0}^{v-1} \omega^{-jk} f(g^j).
\end{eqnarray*}

The {\em power spectral density} of a function $f\in L^2(G)$ 
is the function $\psd_f\in L^2(\hG)$ defined by
\begin{equation} \label{def:psd}
\psd_f(\chi)=\left| \dft_f(\chi) \right|^2.
\end{equation}

In various places, in the case when $G$ is cyclic, the following simple lemma is referred to as the ``Wiener-Khinchin theorem'', see \cite[Theorem 1]{FGS:2001} and its references, and also 
\cite{DK:DCC:2014}.

\begin{lemma} \label{le:W-H}
$\psd_f = \dft(\paf_f), \quad f\in L^2(G).$
\end{lemma}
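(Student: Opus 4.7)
The plan is to give a direct computation using the definitions of $\dft$, $\paf_f$, and $\psd_f$, exploiting that characters are multiplicative. The whole argument fits in a few lines once one rewrites the double sum and substitutes a new summation variable.

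First, I would unfold $\dft(\paf_f)(\chi)$ by definition:
$$
\dft(\paf_f)(\chi)=\sum_{x\in G}\paf_f(x)\bar{\chi}(x)
=\sum_{x\in G}\sum_{y\in G}f(xy)\bar{f}(y)\bar{\chi}(x).
$$
The key move is to introduce the substitution $z=xy$, so $x=zy^{-1}$ and, since $\chi$ is a homomorphism, $\bar{\chi}(x)=\bar{\chi}(z)\chi(y)$. As $x$ ranges over $G$ for fixed $y$, so does $z$. The double sum therefore factors:
$$
\dft(\paf_f)(\chi)=\left(\sum_{z\in G}f(z)\bar{\chi}(z)\right)\left(\sum_{y\in G}\bar{f}(y)\chi(y)\right)
=\dft_f(\chi)\,\overline{\dft_f(\chi)}=|\dft_f(\chi)|^2,
$$
which is $\psd_f(\chi)$ by (\ref{def:psd}).

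Alternatively, I could argue via the convolution property (b): defining $g(y)=\bar{f}(y^{-1})$, one checks $\paf_f=f*g$ (using that $G$ is abelian so $xy=yx$), and a short change of variable shows $\dft_g(\chi)=\overline{\dft_f(\chi)}$; then property (b) gives $\dft(\paf_f)=\dft_f\cdot\overline{\dft_f}=\psd_f$. I would present the direct computation as the main proof since it requires nothing beyond the definitions and the homomorphism property of $\chi$.

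There is no real obstacle here; the only thing to watch is handling the complex conjugation correctly (in particular noting that $\overline{\bar{\chi}(y^{-1})}=\chi(y^{-1})^{-1}\cdot\ldots$ simplifications all rely on $|\chi(y)|=1$ and $\chi(y^{-1})=\bar{\chi}(y)$, facts established in the preceding section). The factorization of the double sum is the one step that makes the result transparent.
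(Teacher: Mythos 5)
Your direct computation is correct and is essentially the paper's own proof read in the opposite direction: the paper starts from $\psd_f(\chi)=|\dft_f(\chi)|^2$, expands the square into a double sum and regroups via $z=xy^{-1}$ to extract $\paf_f$, while you start from $\dft(\paf_f)(\chi)$ and factor the double sum via $z=xy$ into $\dft_f(\chi)\overline{\dft_f(\chi)}$. The change of variable and the use of $\bar{\chi}(y^{-1})=\chi(y)$ are the same key steps in both, so this is the same argument in substance.
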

\begin{proof} For $\chi\in\hG$ we have
\begin{eqnarray*}
\psd_f(\chi) &=& 
\left| \sum_{x\in G} f(x)\bar{\chi}(x) \right|^2 \\
&=& 
\sum_{x,y\in G} f(x)\bar{f}(y)\bar{\chi}(x)\bar{\chi}(y^{-1}) \\
&=& 
\sum_{x,y\in G} f(x)\bar{f}(y)\bar{\chi}(xy^{-1}) \\
&=& 
\sum_{z\in G} \left( \sum_{y\in G} f(yz)\bar{f}(y) \right) \bar{\chi}(z) \\
&=& \sum_{z\in G} \paf_f(z)\bar{\chi}(z) \\
&=& \dft(\paf_f)(\chi).
\end{eqnarray*}
\end{proof}
\qed

\section{Complementary functions} \label{sec:compl}

The notion of complementary sequences plays an important role
in combinatorics, see e.g. \cite{KK:2007,SY:1992}.
By analogy, we define the complementary functions on $G$.

\begin{definition} \label{def:compl}
We say that the functions $f_1,f_2,\ldots,f_t\in L^2(G)$ 
are {\em complementary} if
\begin{equation} \label{eq:compl=}
\sum_{i=1}^t \paf_{f_i}=(\alpha_0-\alpha)\delta_e+\alpha\theta.
\end{equation}
or, equivalently, 
\begin{equation} \label{eq:compl}
\sum_{i=1}^t \paf_{f_i}(s)=
\begin{cases}
\alpha_0,& {\rm if}~ s=e;\\
\alpha,& {\rm otherwise},
\end{cases}
\end{equation}
for some constants $\alpha_0$ and $\alpha$ 
(the $\paf$-constants).
\end{definition}

Note that
$$
\alpha_0=\sum_{i=1}^t \sum_{x\in G} |f_i(x)|^2
=\sum_{i=1}^t \|f_i\|^2.
$$
In the special case when the $f_i$ take values in 
$\{\pm1\}$, we have $\alpha_0=tv$.

By analogy with $G$, we introduce the notation $\hat{\theta}$ 
for the trivial character of the group $\hG$, and denote by 
$\delta_\theta$ the function on $\hG$ which takes value 1 
at $\theta$ and value 0 at all other characters $\chi$ of $G$.
It is easy to verify that
\begin{equation} \label{eq:delta-theta}
\dft_{\delta_e}=\hat{\theta}, \quad  
\dft_\theta=v\delta_\theta.
\end{equation}

\begin{theorem} \label{thm:paf=psd}
The functions $f_1,f_2,\ldots,f_t\in L^2(G)$ are 
complementary with \paf-constants $\alpha_0$ and $\alpha$ 
if and only if
\begin{equation} \label{eq:compl-psd=}
\sum_{i=1}^t \psd_{f_i}
=(\beta_0-\beta)\delta_\theta+\beta\hat{\theta}.
\end{equation}
or, equivalently, 
\begin{equation} \label{eq:compl-psd}
\sum_{i=1}^t \psd_{f_i}(\chi)=
\begin{cases}
\beta_0,& {\rm if}~ \chi=\theta;\\
\beta,& {\rm otherwise},
\end{cases}
\end{equation}
where the constants $\beta_0$ and $\beta$ (the $\psd$-constants) are given by
\begin{equation} \label{eq:konstante}
\beta_0=\alpha_0+(v-1)\alpha, \quad \beta=\alpha_0-\alpha. 
\end{equation}
\end{theorem}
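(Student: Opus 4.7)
The plan is to apply the discrete Fourier transform to the defining identity (\ref{eq:compl=}) and convert each $\paf_{f_i}$ into $\psd_{f_i}$ using Lemma \ref{le:W-H}. Because $\dft$ is a linear isomorphism (property (a) of Section \ref{sec:dft}), this transformation is reversible, so a single calculation will establish both directions of the equivalence.

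First, assume the $f_i$ are complementary with $\paf$-constants $\alpha_0, \alpha$, so that
\begin{equation*}
\sum_{i=1}^t \paf_{f_i} = (\alpha_0-\alpha)\delta_e + \alpha\theta.
\end{equation*}
Apply $\dft$ to both sides and use linearity. On the left, Lemma \ref{le:W-H} gives $\dft(\paf_{f_i}) = \psd_{f_i}$. On the right, the identities (\ref{eq:delta-theta}) give $\dft(\delta_e) = \hat{\theta}$ and $\dft(\theta) = v\delta_\theta$. Therefore
\begin{equation*}
\sum_{i=1}^t \psd_{f_i} = v\alpha\,\delta_\theta + (\alpha_0-\alpha)\hat{\theta}.
\end{equation*}
Matching this to the target form $(\beta_0-\beta)\delta_\theta + \beta\hat{\theta}$ yields $\beta = \alpha_0 - \alpha$ and $\beta_0 - \beta = v\alpha$, hence $\beta_0 = \alpha_0 + (v-1)\alpha$, which is exactly (\ref{eq:konstante}).

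For the converse, apply $\dft^{-1}$ to (\ref{eq:compl-psd=}) and use the same pair of identities in the reverse direction (together with Lemma \ref{le:W-H} and the injectivity of $\dft$) to recover (\ref{eq:compl=}); solving (\ref{eq:konstante}) for $\alpha_0, \alpha$ in terms of $\beta_0, \beta$ gives the inverse change of constants. The legitimacy of comparing coefficients rests on the fact that $\delta_\theta$ and $\hat{\theta}$ are linearly independent in $L^2(\hG)$ whenever $v \ge 2$, which holds by (\ref{uslovi}). There is no genuine obstacle here; the proof is essentially a one-line application of DFT, and the only real work is careful bookkeeping of the two constants and checking the two basis identities (\ref{eq:delta-theta}).
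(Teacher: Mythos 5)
Your proof is correct and follows essentially the same route as the paper: apply $\dft$ to (\ref{eq:compl=}), use Lemma \ref{le:W-H} together with the identities (\ref{eq:delta-theta}) to obtain $\sum_i \psd_{f_i}=(\alpha_0-\alpha)\hat{\theta}+v\alpha\,\delta_\theta$, read off (\ref{eq:konstante}), and invoke $\dft^{-1}$ for the converse. The only difference is your explicit (and harmless) remark about the linear independence of $\delta_\theta$ and $\hat{\theta}$, which the paper leaves implicit.
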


\begin{proof} 
Since at the points $\theta$ and $\chi\ne\theta$ the function
$\delta_\theta$ takes values 1 and 0, respectively, while 
the function $\hat{\theta}$ takes value 1 at all points 
$\chi\in\hG$, we deduce that the equations (\ref{eq:compl-psd=}) 
and (\ref{eq:compl-psd}) are equivalent.

Assume that the functions $f_1,f_2,\ldots,f_t$ are complementary, i.e., that (\ref{eq:compl=}) holds. 
By applying \dft~ to (\ref{eq:compl=}) 
and by using Lemma \ref{le:W-H} and the equations 
(\ref{eq:delta-theta}), we obtain that
$$
\sum_{i=1}^t \psd_{f_i}
=(\alpha_0-\alpha)\hat{\theta}+\alpha v\delta_\theta.
$$
Hence, the equation (\ref{eq:compl-psd=}) holds where the constants $\beta_0$ and $\beta$ are defined as in  (\ref{eq:konstante}).

To prove the converse, we just have to apply the inverse 
Fourier transform to the equation (\ref{eq:compl-psd=}).
\end{proof}
\qed

For any subset $X\subseteq G$ we define a $\{\pm1\}$-valued 
function $f_X$ on $G$ as follows: $f_X(x)$ is equal to $-1$ 
if $x\in X$ and is equal to $+1$ otherwise. We also say that 
the function $f_X$ is {\em associated with} $X$.

In the next theorem we show that each difference family with $t$ base blocks gives $t$ complementary functions having values in $\{\pm 1\}$, and we compute their $\paf$-constants. Their 
$\psd$-constants can be computed by using the formulas 
(\ref{thm:paf=psd}).

\begin{theorem} \label{thm:compl-fun}
Let $(X_1,X_2,\ldots,X_t)$ be a difference family in $G$ with 
parameter set (\ref{eq:param}) and let $f_i=f_{X_i}$,  
$i=1,2,\ldots,t$, be their associated functions. Then 

(a) \begin{equation} \label{eq:glavna}
\sum_i N(a_{f_i})=4n\cdot e+(tv-4n)G;
\end{equation}

(b) the $f_i:G\to \{\pm1\}$ are non-constant complementary functions with $\paf$-constants 
\begin{equation} \label{paf-konst}
\alpha_0=tv, \quad \alpha=tv-4n;
\end{equation}

(c) \begin{equation} \label{eq:sum-kvad}
\sum_{i=1}^t (v-2k_i)^2=4n+v(tv-4n).
\end{equation}
\end{theorem}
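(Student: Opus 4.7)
\textbf{Proof plan for Theorem \ref{thm:compl-fun}.}

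My overall strategy is to first prove (a) by translating each $f_i$ into an explicit element of the group algebra $\cR$, then deduce (b) by matching coefficients with equation (\ref{eq:paf-norm}), and finally obtain (c) by summing the identities from (b) over all $x\in G$.

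For part (a), the key observation is that the $\pm 1$ function $f_i$ corresponds to the group-algebra element $a_{f_i}=G-2X_i$, where $G$ and $X_i$ are interpreted as elements of $\cR$. Expanding $N(a_{f_i})=(G-2X_i)(G-2X_i)^*$ and using three easy identities in $\cR$, namely $G^*=G$, $GG=vG$, and $X_i G = X_i^* G = k_i G$ (which follow because $xG=G$ for every $x\in G$), I obtain
\[
N(a_{f_i})=(v-4k_i)G+4N(X_i).
\]
Summing over $i$ and invoking Lemma \ref{le:folk}, which gives $\sum_i N(X_i)=n\cdot e+\lambda G$, the coefficient of $G$ collapses via $n=\sum_i k_i-\lambda$ to $tv-4n$, and the coefficient of $e$ becomes $4n$. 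This is exactly (\ref{eq:glavna}).

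For part (b), I will compare (\ref{eq:glavna}) with (\ref{eq:paf-norm}) applied to each $f_i$: the identity $N(a_{f_i})=\sum_{x\in G}\paf_{f_i}(x)x$ lets me read off, after summing on $i$, the coefficient of $e$ as $\sum_i \paf_{f_i}(e)=tv$ and the coefficient of any other $x$ as $\sum_i \paf_{f_i}(x)=tv-4n$. This matches the definition of complementarity with $\alpha_0=tv$ and $\alpha=tv-4n$. Non-constancy of each $f_i$ is immediate since each $X_i$ is a proper nonempty subset of $G$.

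For part (c), the clean trick is the identity $\sum_{x\in G}\paf_{f_i}(x)=\bigl|\sum_{y\in G}f_i(y)\bigr|^2=(v-2k_i)^2$, obtained by swapping the order of summation in (\ref{def:paf}) and factoring. Summing the $\paf$-constant identities from (b) over all $v$ elements of $G$, the $e$-term contributes $tv$ and the remaining $v-1$ terms each contribute $tv-4n$, giving a total of $tv+(v-1)(tv-4n)=4n+v(tv-4n)$. Equating with $\sum_i\sum_x \paf_{f_i}(x)=\sum_i(v-2k_i)^2$ yields (\ref{eq:sum-kvad}).

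The only mildly subtle step is the group-algebra bookkeeping in part (a), where one must be careful that $G^*=G$ (the sum of group elements is symmetric under inversion) and that $X_iG=k_iG$ rather than $|X_i|\cdot e$; everything else is formal manipulation using results already established in the excerpt.
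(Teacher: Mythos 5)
Your proposal is correct and follows essentially the same route as the paper: the same expansion $N(a_{f_i})=(v-4k_i)G+4N(X_i)$ combined with Lemma \ref{le:folk} for (a), and coefficient comparison via (\ref{eq:paf-norm}) for (b). For (c) the paper simply applies the augmentation map $\varepsilon$ to (a), which is exactly your step of summing all $\paf$ values (i.e.\ all coefficients), so the two arguments coincide.
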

\begin{proof}
If we view the $X_i$ and $G$ as elements of $\cR$, then we have $xG=G$ for $x\in G$, $X_iG=GX_i=k_iG$ for each $i$, and $GG=vG$.
Since $a_{f_i}=G-2X_i$, we have
\begin{equation} \label{eq:Nafi}
N(a_{f_i})=(G-2X_i)(G-2X_i^*)=(v-4k_i)G+4N(X_i).
\end{equation}

The assertion (a) follows by adding up these equations and by 
applying Lemma \ref{le:folk}.

The assertion (b) follows from (a) by using the formula 
(\ref{eq:paf-norm}) and by comparing the coefficients of 
$x\in G$ on both sides.

The assertion (c) follows from (a) by applying the augmentation map $\varepsilon$. 
\end{proof}
\qed

The converse of the assertion (b) is also valid.
\begin{theorem} \label{thm:converse}
Let $f_i:G\to \{\pm1\}$, $i=1,2,\ldots,t$, be non-constant complementary functions with $\paf$-constants 
$\alpha_0$ and $\alpha$. 
Then the $X_i=\{x\in G:f_i(x)=-1\}$, $i=1,2,\ldots,t$, 
form a difference family of $G$ with parameters
$(v;k_1,k_2,\ldots,k_t;\lambda)$ where $k_i=|X_i|$ and 
$\lambda=\sum k_i-(tv-\alpha)/4$.
\end{theorem}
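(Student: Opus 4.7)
The plan is to run the proof of Theorem \ref{thm:compl-fun} in reverse: convert the complementarity of the $f_i$ into a group algebra identity about $\sum_i N(a_{f_i})$, translate this into an identity about $\sum_i N(X_i)$ using $a_{f_i}=G-2X_i$, and then invoke Lemma \ref{le:folk} to conclude that $(X_1,\ldots,X_t)$ is a difference family.

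First I would observe that since each $f_i$ is $\{\pm1\}$-valued, $\alpha_0=\sum_i\|f_i\|^2=tv$. Applying identity (\ref{eq:paf-norm}) term by term to the defining relation (\ref{eq:compl=}) gives the group algebra equation
\[
\sum_{i=1}^t N(a_{f_i})=(tv-\alpha)\cdot e+\alpha\, G.
\]
Next, for $X_i=\{x\in G:f_i(x)=-1\}$ we have $a_{f_i}=G-2X_i$, so the same computation that produced (\ref{eq:Nafi}) yields $N(a_{f_i})=(v-4k_i)G+4N(X_i)$. Summing over $i$ and substituting in the above identity, I can solve for $\sum_i N(X_i)$ and obtain
\[
\sum_{i=1}^t N(X_i)=\frac{tv-\alpha}{4}\cdot e+\left(\sum_{i=1}^t k_i-\frac{tv-\alpha}{4}\right)G.
\]

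To apply Lemma \ref{le:folk}, I would set $n=(tv-\alpha)/4$ and $\lambda=\sum_i k_i-n$ and check the hypotheses of that lemma. The $X_i$ are proper nonempty because each $f_i$ is non-constant. The main obstacle, such as it is, is verifying that $\lambda$ is a nonnegative integer, as required by Lemma \ref{le:folk}. This follows by comparing coefficients in the displayed identity: since each $N(X_i)=X_iX_i^*$ has nonnegative integer coefficients when expanded in the basis $G$ of $\cR$, so does the sum, and $\lambda$ equals the coefficient of any $y\ne e$ on the left hand side. (In particular this also forces $tv-\alpha$ to be a nonnegative multiple of $4$, so $n$ is a nonnegative integer as well.) Lemma \ref{le:folk} then yields that $(X_1,\ldots,X_t)$ is a difference family with parameter set $(v;k_1,\ldots,k_t;\lambda)$, and the formula $\lambda=\sum_i k_i-(tv-\alpha)/4$ is exactly what we have constructed.
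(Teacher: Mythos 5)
Your proposal is correct and follows essentially the same route as the paper's own proof: pass from (\ref{eq:compl=}) to $\sum_i N(a_{f_i})=(\alpha_0-\alpha)e+\alpha G$ via (\ref{eq:paf-norm}), use $\alpha_0=tv$ and (\ref{eq:Nafi}) to isolate $\sum_i N(X_i)$, and conclude with Lemma \ref{le:folk}. Your extra remark justifying that $\lambda$ (and hence $(tv-\alpha)/4$) is a nonnegative integer by reading off coefficients of $N(X_i)=X_iX_i^*$ is a welcome detail that the paper states without proof.
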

\begin{proof}
Since the functions $f_i$ are non-constant, the $X_i$ are 
proper non-empty subsets of $G$. 
By the hypothesis, the formula (\ref{eq:compl=}) holds true. 
Hence, for $x\in G$ we have
$$
\sum \paf_{f_i}(x)=(\alpha_0-\alpha)\delta_e(x)+\alpha.
$$
By multiplying this equation by $x$, summing up over all 
$x\in G$, and by using (\ref{eq:paf-norm}), we obtain that 
$$
\sum N(a_{f_i})=(\alpha_0-\alpha)e+\alpha G.
$$
Since $\alpha_0=tv$, by using (\ref{eq:Nafi}) we obtain that 
$$
\sum N(X_i)=\frac{tv-\alpha}{4}e
+\left(\sum k_i-\frac{tv-\alpha}{4} \right) G.
$$
It follows that $tv-\alpha$ is divisible by 4 and the 
assertion of the theorem follows from Lemma \ref{le:folk}. 
\end{proof}
\qed

\section{PSD-test} \label{sec:test}

Suppose that we want to search for a difference family 
$(X_1,X_2,\ldots,X_t)$ in $G$ having the parameter set
(\ref{eq:param}). An exhaustive search can be carried out 
only when the order, $v$, of $G$ is relatively small.
For larger $v$ one uses some randomized or heuristic 
procedure to generate candidates for the base blocks $X_i$. 
One can often improve such a procedure by using a test, 
known as the {\em PSD-test}, to discard some of the candidates 
for the set $X_i$. This test is based on Theorems 
\ref{thm:paf=psd} and \ref{thm:compl-fun}. 
First, the functions $f_i$ associated 
with $X_i$ are complementary with \paf-constants 
$\alpha_0=tv$ and $\alpha=tv-4n$. Second, for any nontrivial 
character $\chi$ of $G$ we must have 
$$
\sum_i \psd_{f_i}(\chi)=\beta=4n,\quad \chi\ne\theta. 
$$
(Recall that $n=\sum k_i -\lambda$.)

Since $\psd_{f_i}(\chi)=|\dft_{f_i}(\chi)|^2 \ge0$,
we can discard the candidate $X_i$ if for some 
$\chi\ne\theta$ we have $\psd_{f_i}(\chi)>4n$. 
In that case we say that $X_i$ (or $f_i$) fails the \psd-test.
This test is most effective when $t=2$.

There is another method for performing the \psd-test. 
For that we need to assign to $X\subseteq G$ the function 
$\Phi_X:G\to\bZ$ defined by
$$
N(X)=XX^*=\sum_{x\in G} \Phi_X(x)x.
$$
As $N(X)^*=N(X)$, we have $\Phi_X(x^{-1})=\Phi_X(x)$ for all 
$x\in G$.

Next we set $\Phi_i=\Phi_{X_i}$ for $i=1,2,\ldots,t$.
By (\ref{eq:paf-norm}) and (\ref{eq:Nafi}) we have
$$
\paf_{f_i}=(v-4k_i)\theta+4\Phi_i.
$$
Since $\dft_\theta=v\delta_\theta$, by applying the \dft-transform to the above equation and by using Lemma \ref{le:W-H}, 
we obtain that
$$
\psd_{f_i}=(v-4k_i)v\delta_\theta+4\dft_{\Phi_i}.
$$
By evaluating both sides at a nontrivial character $\chi$, 
we obtain that $\psd_{f_i}(\chi)=4\dft_{\Phi_i}(\chi)$. 
Since $\alpha_0=tv$, by adding up these equations and by using 
Theorem \ref{thm:paf=psd} we obtain that
$$
\sum_i \dft_{\Phi_i}(\chi)=n, \quad \chi\ne\theta.
$$ 
Hence, $X_i$ passes the \psd-test if and only if for all nontrivial characters $\chi$ of $G$ we have
$$
\dft_{\Phi_i}(\chi)\le n,\quad \chi\ne\theta.
$$
Since $\Phi_i(e)=k_i$, $\chi(e)=1$ and 
$\dft_{\Phi_i}(\chi)=\langle \Phi_i,\chi \rangle$ this inequality 
can be written as follows
$$
\sum_{x\in G\setminus \{e\} } 
\Phi_i(x) \Re \chi(x) \le n-k_i,\quad \chi\ne\theta.
$$
($\Re\chi$ denotes the real part of $\chi$.)

\section{Compression of complementary functions} 
\label{sec:compr}

Let $M$ be a subgroup of order $m$ of $G$ and $H=G/M$, 
the corresponding quotient group of order $d=v/m$. 
We denote by $\sigma$ the canonical map $G\to H$. 
Further, we denote by $\theta_H$ and $\hat{\theta}_H$ the 
trivial characters of $H$ and its dual group $\hat{H}$, 
respectively. Finally $\hat{\sigma}:\hat{H}\to\hat{G}$ 
will be the dual map of $\sigma$ defined by 
$\hat{\sigma}(\phi)=\phi\sigma$, $\phi\in\hat{H}$.

For any $f\in L^2(G)$ we define $f^M\in L^2(H)$ by
$$
f^M(xM)=\sum_{z\in M} f(xz), \quad x\in G.
$$
We say that $f^M$ is the {\em M-compression} of $f$ and that 
$m$ is the {\em compression factor}.
Note that $\bar{f}^M=\overline{f^M}$.
We choose a set $Y$ of coset representatives of $M$ in $G$. Then each $x\in G$ can be written uniquely as $x=yz$ with $y\in Y$ 
and $z\in M$.

\begin{lemma} \label{le:f^M}
For $f\in L^2(G)$ we have $\paf_{f^M}=(\paf_f)^M$, i.e.,
\begin{equation} \label{eq:f^M}
\paf_{f^M}(xM)=\sum_{z\in M} \paf_f(xz),\quad x\in G.
\end{equation}
\end{lemma}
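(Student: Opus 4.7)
The plan is to expand $\paf_{f^M}(xM)$ directly from the definitions and then reorganize the resulting triple sum into the desired form by a change of summation variable, crucially exploiting the fact that $G$ is abelian.

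First I would write
\begin{equation*}
\paf_{f^M}(xM)=\sum_{yM\in H} f^M(xyM)\overline{f^M(yM)}
\end{equation*}
using that $xM\cdot yM=xyM$ in $H$, and then substitute the definition of $f^M$ in both factors, producing
\begin{equation*}
\paf_{f^M}(xM)=\sum_{yM\in H}\sum_{z_1\in M}\sum_{z_2\in M} f(xyz_1)\overline{f(yz_2)}.
\end{equation*}
Choosing a transversal $Y$ for $M$ in $G$ lets me replace $\sum_{yM\in H}$ by $\sum_{y\in Y}$, which gives a clean iterated sum over $Y\times M\times M$.

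Next I would perform the change of variable $z=z_1z_2^{-1}\in M$ (legitimate since $M$ is abelian as a subgroup of the abelian group $G$); for each fixed $y$ and $z_2$ the index $z$ ranges over $M$ exactly once as $z_1$ does. Because $G$ is abelian, $xyz_1=xz(yz_2)$, so the summand becomes $f(xz\cdot yz_2)\overline{f(yz_2)}$. Then setting $w=yz_2$ turns the double summation $\sum_{y\in Y}\sum_{z_2\in M}$ into a single sum $\sum_{w\in G}$, and I obtain
\begin{equation*}
\paf_{f^M}(xM)=\sum_{z\in M}\sum_{w\in G} f(xzw)\overline{f(w)}=\sum_{z\in M}\paf_f(xz),
\end{equation*}
which is exactly (\ref{eq:f^M}).

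The only delicate point is bookkeeping in the change of variable: one must check that $(y,z_1,z_2)\mapsto(z,w)=(z_1z_2^{-1},yz_2)$ with $y\in Y$, $z_1,z_2\in M$ is a bijection onto $M\times G$ (with $z$ still free), which follows from the uniqueness of the coset decomposition $G=YM$ and the group structure of $M$. Everything else is a routine manipulation; no nontrivial inequalities or character-theoretic arguments are needed, since the identity is purely combinatorial.
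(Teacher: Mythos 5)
Your proof is correct and follows essentially the same route as the paper: expand both occurrences of $f^M$ over a transversal $Y$, substitute $z_1=zz_2$ (the paper writes $p=wz$), and recombine $yz_2$ into a single variable ranging over $G$. The bookkeeping you flag at the end is exactly the bijection $Y\times M\to G$ given by the coset decomposition, which the paper uses implicitly.
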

\begin{proof}
We have
\begin{eqnarray*}
\paf_{f^M}(xM) &=& \sum_{y\in Y} f^M(xyM)\bar{f}^M(yM) \\
&=& \sum_{y\in Y} \sum_{p,w\in M} f(xyp)\bar{f}(yw).
\end{eqnarray*}
After setting $p=wz$, we obtain that
\begin{eqnarray*}
\paf_{f^M}(xM) &=& \sum_{z\in M} \sum_{y\in Y} \sum_{w\in M} f(xzyw)\bar{f}(yw) \\
&=& \sum_{z\in M} \paf_f(xz) \\
&=& (\paf_f)^M(xM).
\end{eqnarray*}
\end{proof}
\qed

\begin{theorem} \label{thm:compr}
Let $f_1,f_2,\ldots,f_t\in L^2(G)$ be complementary functions 
with $\paf$ constants $\alpha_0$ and $\alpha$. 
Then the functions  $f^M_1,f^M_2,\ldots,f^M_t\in L^2(H)$ are 
complementary with $\paf$ constants
\begin{equation} \label{paf-konst-zg}
\alpha^M_0=\alpha_0 +(m-1)\alpha, \quad \alpha^M=m\alpha.
\end{equation}
\end{theorem}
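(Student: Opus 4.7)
The plan is to reduce the theorem directly to Lemma \ref{le:f^M} and the hypothesis (\ref{eq:compl=}), by summing over the index $i$ and then evaluating at a coset $xM$.

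First, I would fix an arbitrary $x\in G$ and compute
\[
\sum_{i=1}^t \paf_{f_i^M}(xM) \;=\; \sum_{i=1}^t \sum_{z\in M} \paf_{f_i}(xz) \;=\; \sum_{z\in M}\left(\sum_{i=1}^t \paf_{f_i}(xz)\right),
\]
where the first equality uses Lemma \ref{le:f^M} and the second just interchanges the two finite sums. This reduces the question to understanding the inner sum over $i$, which the complementary hypothesis (\ref{eq:compl}) already describes: it equals $\alpha_0$ when $xz=e$ and $\alpha$ otherwise.

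Next I would split into two cases according to whether $xM$ is the identity coset. If $x\in M$, then as $z$ ranges over $M$ the product $xz$ also ranges over $M$, and exactly one value of $z$ (namely $z=x^{-1}$) gives $xz=e$; the remaining $m-1$ terms contribute $\alpha$ each, producing $\alpha_0 + (m-1)\alpha$. If $x\notin M$, then $xz\ne e$ for every $z\in M$, so all $m$ terms contribute $\alpha$, giving $m\alpha$. This is exactly the two-valued behavior required by Definition \ref{def:compl} with the constants $\alpha_0^M$ and $\alpha^M$ as stated in (\ref{paf-konst-zg}).

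I do not foresee any real obstacle; everything is a one-line application of Lemma \ref{le:f^M} followed by case analysis on whether $x\in M$. The only minor point worth being careful about is distinguishing the identity of $H$ (the coset $M=eM$) from the identity of $G$, so that the case dichotomy in the definition of complementarity on $H$ is correctly aligned with the dichotomy used for the sum $\sum_{z\in M}\sum_i \paf_{f_i}(xz)$.
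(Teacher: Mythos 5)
Your proposal is correct and follows essentially the same route as the paper: both apply Lemma \ref{le:f^M}, sum over $i$, and evaluate $\sum_{z\in M}\sum_i \paf_{f_i}(xz)$ using the complementarity hypothesis; the paper merely phrases the case analysis on $x\in M$ via the identity $\sum_{z\in M}\delta_e(xz)=\delta_M(xM)$ instead of splitting into two explicit cases.
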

\begin{proof}
By Lemma \ref{le:f^M} we have
$$
{\paf_{f_i}}^M(xM)=\sum_{z\in M} \paf_{f_i} (xz).
$$
By adding these equation and by using the equation 
(\ref{eq:compl-psd=}), we obtain that
\begin{eqnarray*}
\sum_{i=1}^t {\paf_{f_i}}^M(xM) &=& \sum_{z\in M} 
\left((\alpha_0-\alpha)\delta_e(xz)+\alpha\theta(xz)\right) \\
&=& (\alpha_0-\alpha)\delta_M(xM) +m\alpha.
\end{eqnarray*}
(The delta function $\delta_M\in L^2(H)$ takes value 1 at the 
point $M\in H$ and 0 at all other points of $H$.)
Hence, the theorem is proved.
\end{proof}

\begin{corollary} \label{cor:zgusPSD}
Let $f_1,f_2,\ldots,f_t$ be as in the theorem. Then the 
$\psd$-constants $\beta_0^M$ and $\beta^M$ of 
$f_1^M,f_2^M,\ldots,f_t^M$ are the same as the 
$\psd$-constants $\beta_0$ and $\beta$ of $f_1,f_2,\ldots,f_t$.
\end{corollary}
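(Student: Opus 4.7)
My plan is to prove the corollary by direct substitution, combining Theorem \ref{thm:compr} (which gives the $\paf$-constants $\alpha_0^M, \alpha^M$ of the compressed functions on $H$) with Theorem \ref{thm:paf=psd} (which converts $\paf$-constants into $\psd$-constants). The key arithmetic fact I will use is that the order of $H=G/M$ is $d=v/m$, so any $\psd$-constant computed on $H$ involves the factor $d-1$ where the corresponding constant on $G$ uses $v-1$.

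First I would apply Theorem \ref{thm:paf=psd} to the compressed complementary family on $H$. This gives
\begin{equation*}
\beta_0^M = \alpha_0^M + (d-1)\alpha^M, \qquad \beta^M = \alpha_0^M - \alpha^M.
\end{equation*}
Then I would substitute the expressions $\alpha_0^M = \alpha_0 + (m-1)\alpha$ and $\alpha^M = m\alpha$ supplied by Theorem \ref{thm:compr}. For $\beta_0^M$ this yields
\begin{equation*}
\beta_0^M = \alpha_0 + (m-1)\alpha + (d-1)m\alpha = \alpha_0 + \bigl(m-1 + md - m\bigr)\alpha = \alpha_0 + (md-1)\alpha,
\end{equation*}
and since $md = v$, this equals $\alpha_0 + (v-1)\alpha = \beta_0$ by (\ref{eq:konstante}). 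For $\beta^M$ the computation is even shorter:
\begin{equation*}
\beta^M = \alpha_0 + (m-1)\alpha - m\alpha = \alpha_0 - \alpha = \beta.
\end{equation*}

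There is no real obstacle here; the corollary is essentially a bookkeeping check that the reparametrization of the $\paf$-constants under compression is precisely the one that leaves the $\psd$-constants invariant. The only thing to be careful about is using the order of $H$ (namely $d$), rather than the order of $G$ (namely $v$), when invoking Theorem \ref{thm:paf=psd} in the compressed setting, and then using the identity $v = md$ at the very end to match the two expressions.
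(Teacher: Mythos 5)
Your proof is correct and is exactly the paper's argument: the paper also deduces the corollary by applying Theorem \ref{thm:paf=psd} to the group $H$ together with the formulas (\ref{paf-konst-zg}), and you have simply written out the arithmetic explicitly. (The paper additionally sketches an alternative route via Lemma \ref{le:kompr} and the identities (\ref{eq:dual}), but its primary proof is the one you gave.)
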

\begin{proof}
This follows from the formulas (\ref{paf-konst-zg}) by applying 
Theorem \ref{thm:paf=psd} to the group $H$.
\end{proof}

An alternative proof of this corollary can be given by 
using the equation (\ref{eq:compl-psd}), the equalities

\begin{equation} \label{eq:dual}
\delta_\theta\circ\hat{\sigma}=\delta_{\theta_H}, \quad \hat{\theta}\circ\hat{\sigma}=\hat{\theta}_H
\end{equation}
 and the following lemma.

\begin{lemma} \label{le:kompr}
For $f\in L^2(G)$ we have 
$\dft_f \circ \hat{\sigma}=\dft_{f^M}$.
\end{lemma}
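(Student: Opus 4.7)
The plan is to unfold the definitions on both sides and reduce the equality to the observation that characters of $H$, pulled back along $\sigma$, are constant on the cosets of $M$ in $G$. Concretely, for a character $\phi\in\hat{H}$ the right-hand side is
$$
\dft_{f^M}(\phi)=\sum_{xM\in H} f^M(xM)\bar{\phi}(xM),
$$
while the left-hand side is
$$
\dft_f(\hat{\sigma}(\phi))=\dft_f(\phi\sigma)=\sum_{x\in G} f(x)\overline{\phi(\sigma(x))}.
$$
So my task reduces to matching these two sums.

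First I would pick the same set $Y$ of coset representatives used in the previous lemma, so that every $x\in G$ has a unique expression $x=yz$ with $y\in Y$ and $z\in M$. Substituting the definition $f^M(yM)=\sum_{z\in M} f(yz)$ into the right-hand side turns it into a double sum
$$
\dft_{f^M}(\phi)=\sum_{y\in Y}\sum_{z\in M} f(yz)\bar{\phi}(yM).
$$
Next I would use the crucial identity $\phi(yM)=\phi(\sigma(yz))$, valid because $\sigma(yz)=yzM=yM$ since $z\in M$. Pulling the character inside the inner sum therefore gives
$$
\dft_{f^M}(\phi)=\sum_{y\in Y}\sum_{z\in M} f(yz)\overline{\phi(\sigma(yz))}.
$$

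Finally I would recombine $(y,z)\mapsto yz$ to a single sum over all $x\in G$, obtaining exactly $\sum_{x\in G} f(x)\overline{\phi(\sigma(x))}=\dft_f(\hat{\sigma}(\phi))$, and since $\phi\in\hat{H}$ was arbitrary, the identity of functions on $\hat{H}$ follows. There is no real obstacle here: the entire content is bookkeeping about the bijection between $G$ and $Y\times M$ together with the fact that $\hat\sigma(\phi)$ factors through $\sigma$. The only point requiring a bit of care is to keep the conjugates and the direction of $\hat\sigma$ straight, since $\hat\sigma$ maps $\hat H$ into $\hat G$ (not the other way around), so the composition $\dft_f\circ\hat\sigma$ really does land in $L^2(\hat H)$, as needed for the equality with $\dft_{f^M}$.
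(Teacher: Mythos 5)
Your proof is correct and is essentially the same as the paper's: both unfold the definitions, decompose $G$ as $Y\times M$ via coset representatives, and use that $\phi\circ\sigma$ is constant on cosets of $M$; you merely run the chain of equalities from $\dft_{f^M}$ toward $\dft_f\circ\hat\sigma$ rather than the other way around.
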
 

\begin{proof}
Let $Y$ be a set of coset representatives of $M$ in $G$. Then 
for $\phi\in\hat{H}$ we have
\begin{eqnarray*}
\dft_f \circ \hat{\sigma} (\phi)
&=& \dft_f(\phi\sigma) \\
&=& \langle f,\phi\sigma \rangle \\
&=& \sum_{x\in G} f(x)\bar{\phi}(xM) \\
&=& \sum_{y\in Y} \sum_{z\in M} f(yz)\bar{\phi}(yM) \\
&=& \sum_{y\in Y} f^M(yM)\bar{\phi}(yM) \\
&=& \langle f^M,\phi \rangle \\
&=& \dft_{f^M}(\phi).
\end{eqnarray*}
\end{proof}

This lemma generalizes \cite[Theorem 3.1]{DK:D-optimal:2015} 
which applies only to the case when $G$ is cyclic. 
Thus, our lemma can be useful when one wants to construct 
a difference family in $G$ by using the compression method
\cite{DK:DCC:2014}.

\section{Regular representation} \label{sec:regular}

Since the algebra $\cR$ comes equipped with a natural basis, 
$G$, we can identify the algebra of linear transformations on 
$\cR$ with $M_v(\bC)$, the algebra of complex matrices of 
order $v$. For this one has to choose an ordering of $G$, 
however we will supress the ordering and will label the 
rows and columns of these matrices by the elements of $G$.
We have defined an involution on $\cR$, and there is also 
one on this matrix algebra, namely the conjugate transpose 
map. The regular representation of $\cR$ is the homomorphism of 
algebras with involution $\Mat:\cR\to M_v(\bC)$ which assigns 
to $X\in \cR$ the matrix of the linear transformation
$\cR\to \cR$ sending $Y\to XY$, $Y\in \cR$. In particular, 
we have $\Mat(X^*)=\Mat(X)^*$, $X\in \cR$.

The matrices $A=[a_{x,y}]\in\Mat(\cR)$ are {\em $G$-invariant}, 
i.e., they satisfy the condition
\begin{equation} \label{eq:gr-inv}
a_{xz,yz}=a_{x,y},\quad x,y,z\in A.
\end{equation}
For any $x\in G$ we have $Gx=G=\sum_{x\in G} x$. This 
implies that $\Mat(G)=J_v$, the all-one matrix of order $v$.

Let us use the hypotheses and notation of Theorem 
\ref{thm:compl-fun}. By setting $A_i=\Mat(a_{f_i})$ for
$i=1,2,\ldots,t$ and by applying $\Mat$ to the equation 
(\ref{eq:glavna}) we obtain the following matrix identity
\begin{equation} \label{eq:Mat-ident}
\sum_{i=1}^t A_iA_i^T = 4nI_v+(tv-4n)J_v.
\end{equation}
Note that $A_i^*=A_i^T$ because the $A_i$ are 
$\{\pm1\}$-matrices.

We shall need later the permutation matrix $R=[r_{x,y}]$ 
of order $v$ whose entries are defined by the formula 
$r_{x,y}=\delta_e(xy)$, $x,y\in G$. This matrix is 
involutory, $R^2=I_v$, but not $G$-invariant in general. 
However, for any $G$-invariant matrix $A=[a_{x,y}]$ we have 
\begin{equation} \label{eq:RAR}
RAR=A^T. 
\end{equation}
Indeed, the $(x,y)$-entry of $RAR$ is 
$\sum_{u,v\in G}\delta_e(xu)a_{u,v}\delta_e(vy)=
a_{x^{-1},y^{-1}}=a_{y,x}$.

\section{Some special classes of difference families} \label{sec:Classes}

For the construction of various type of Hadamard matrices and 
related combinatorial designs several special classes of 
difference families are widely used mostly over finite cyclic 
groups in which case they can be viewed as 
$\{\pm 1\}$-sequences. 
Many of them make sense over noncyclic finite abelian groups.
We list several such families and provide explicit examples.

\subsection{DO-matrices and DO-difference families $(\alpha=2)$}
\label{sub:DO}

Consider the $\{\pm1\}$-matrices $M$ of order $m$. Those among them which have maximum determinant are known as {\em D-optimal 
matrices} or {\em DO-matrices}. If $m=1,2$ or $m$ is a multiple 
of 4 then D-optimal matrices are just the Hadamard matrices. 
They have determinant $m^{m/2}$. When $m=2v$ with $v>1$ odd, 
then there are no Hadamard matrices of order $m$. In that case 
it is well known that 
$$
\det M\le 2^v (2v-1)(v-1)^{v-1}
$$
and that this inequality is strict if $2v-1$ is not a sum of 
two squares. We are interested here only in the case when 
$2v-1$ is a sum of two squares. Henceforth we assume in this 
section that $v$ satisfies this condition. 

For the known results on DO-matrices of order $m=2v$ we refer to 
\cite[V.3]{KO:2007} and \cite{DK:D-optimal:2015}. One can 
construct DO-matrices of order $2v$ from difference families 
$(X_1,X_2)$ in the group $G$ having the parameter sets 
$(v;r,s;\lambda)$, where 
$v/2\ge r\ge s\ge 1$ and $v=2n+1$, $n=r+s-\lambda$. 
We refer th these difference families as {\em DO-difference families}. Recall from Theorem \ref{thm:compl-fun} the 
associated functions $f_i=f_{X_i}$, $i=1,2$, and from section \ref{sec:regular} the matrices $A_i=\Mat(a_{f_i})$. Since in this case $t=2$ and $v=2n+1$, we have $\alpha=2$ and, 
by using the equation (\ref{eq:Mat-ident}), we obtain that $A_1A_1^T+A_2A_2^T=2(v+1)I_v-2J_v$. It follows that

\begin{equation} \label{eq:bicikl-1}
\left[ \begin{array}{cc} A_1 & A_2 \\ -A_2^T & A_1^T
\end{array} \right]
\end{equation}
is a DO-matrix. 

If the base block $X_1$ is symmetric, then the matrix $A_1$ is 
symmetric. Hence, if we multiply by $-1$ the second block-row 
in the above matrix, then that matrix becomes a symmetric 
DO-matrix
\begin{equation} \label{eq:bicikl-2}
\left[ \begin{array}{cc} A_1 & A_2 \\ A_2^T & -A_1^T
\end{array} \right].
\end{equation}

Let us give a simple example. In $G=\bZ_3\times\bZ_3$ we have 
the difference family $X_1=\{(0,0),(1,1),(2,1)\}$, 
$X_2=\{(0,1),(0,2)\}$ with the parameter set 
$(9;3,2;1)$, $n=4$, $v=9=2n+1$. Moreover, the block $X_2$ 
is symmetric. This gives a symmetric DO-matrix of order 18. 
We order the elements of $\bZ_3\times\bZ_3$
lexicographically. 
Since we used in previous sections the multiplicative notation for the group $G$, let us write $x$ for $(0,1)$ 
and $y$ for $(1,0)$. Then $x^3=y^3=e$ and $xy=yx$. 
The values of the function $f_1$ at the basis elements 
$e,x,x^2,y,xy,x^2y,y^2,xy^2,x^2y^2$ are 
$-1,1,1, 1,-1,1, -1,1,1$, respectively. The values 
of $f_2$ are $1,-1,-1, 1,1,1, 1,1,1$.
The corresponding elements $a_{f_i}\in \cR$ are 
\begin{eqnarray*}
a_{f_1} &=& -e+x+x^2+y-xy+x^2y+y^2-xy^2+x^2y^2, \\
a_{f_2} &=& e-x-x^2+y+xy+x^2y+y^2+xy^2+x^2y^2.
\end{eqnarray*}

Since $a_{f_1}e=a_{f_1}$, the first column of the matrix 
$A_1=\Mat(a_{f_1})$ is the transpose of the row 
$[-1,1,1, 1,-1,1, -1,1,1]$. Similarly, the first column of 
$A_2$ is the transpose of the row $[1,-1,-1, 1,1,1, 1,1,1]$. Further, by using some linear algebra, one finds that
$$
A_1=\left[ \begin{array}{ccc} 
P_1&P_2&P_3\\P_3&P_1&P_2\\P_2&P_3&P_1
\end{array} \right], \quad
A_2=\left[ \begin{array}{ccc} 
Q_1&Q_2&Q_3\\Q_3&Q_1&Q_2\\Q_2&Q_3&Q_1
\end{array} \right],
$$
where $P_1=J_3-2I_3$, $P_2=P_3=J_3-2C_3^T$,
$Q_1=2I_3-J_3$, $Q_2=Q_3=J_3$, and $C_3$ is the circulant matrix 
with first row $[0,1,0]$. 

We refer to matrices like $A_i$ as {\em multicirculants}. More 
generally, circulant matrices are multicirculants and, 
recursively, block-circulant matrices whose blocks are 
multicirculants are also multicirculants.

Since our block $X_2$ (and the matrix block $A_2$) is 
symmetric, we first switch $A_1$ and $A_2$ and then plug them into the array (\ref{eq:bicikl-2}). In this way we obtain 
a symmetric DO-matrix of order 18. One can easily verify that 
$A_1A_1^T+A_2A_2^T=16I_9+2J_9$.

\subsection{Periodic Golay pairs $(\alpha=0)$}
\label{sub:PerGol}

If $(X_1,X_2)$ is a difference family in $G$ with parameters 
$(v;k_1,k_2;\lambda)$ and $v=2n$ then, by Theorem 
\ref{thm:compl-fun}, the functions $f_i=f_{X_i}$, $i=1,2$, 
are complementary with \paf-constants $\alpha_0=2v$ and 
$\alpha=0$. When $G$ is cyclic, the functions $f_1$ and $f_2$ 
may be viewed as $\{\pm 1\}$-sequences 
$f_i(0),f_i(1),\ldots,f_i(v-1)$ and are known as {\em periodic 
Golay pairs}. For more information about such sequences and 
in particular the existence question for specified length $v$ 
see \cite{DK:PerGolay72:2015}. 

There is a necessary arithmetic condition for the existence 
of periodic Golay pairs of length $v$, a special case of a 
theorem of Arasu and Xiang \cite{Arasu:Xiang:DCC:1992}. 
This condition is not satisfied when $v=18$. Consequently there 
are no periodic Golay pairs of length 18. Equivalently, there are no cyclic difference familes with parameter set 
$(18;9,6;6)$. However, the theorem of Arasu and Xiang is applicable only to cyclic groups $G$ and may fail for other 
groups. 
Indeed, we have found that in the non-cyclic group 
$G=\bZ_3\times\bZ_6$ there exist difference families with 
parameter set $(18;9,6;6)$. Let us give an example
\begin{eqnarray*}
X_1 &=& \{ (0,0),(0,1),(0,4),(1,0),(1,2),(1,5),(2,2),(2,3),
(2,4) \}, \\
X_2 &=& \{ (0,0),(0,1),(0,3),(0,5),(1,0),(2,0) \}.
\end{eqnarray*}

Let $A_1$ and $A_2$ be the matrices defined as in section 
\ref{sec:regular}. With suitable indexing, these matrices are 
multicirculants and since $t=2$ and $v=2n$ the equation 
(\ref{eq:Mat-ident}) shows that $A_1A_1^T+A_2A_2^T=36I_{18}$. 
Hence, the matrix
\begin{equation*}
\left[ \begin{array}{cc} A_2 & A_1 \\ A_1^T & -A_2^T 
\end{array} \right]
\end{equation*}
is a symmetric Hadamard matrix of order 36 made up from two 
multicirculants.

\subsection{Legendre pairs $(\alpha=-2)$} \label{sub:LP}

If $q\equiv 3 \pmod{4}$ is a prime power then the nonzero 
squares in a finite field $F_q$ of order $q$ form a difference 
set in the additive group of $F_q$. The parameters of this 
difference set are $(q;(q-1)/2;(q-3)/4)$. If we use two 
copies of this difference set, we obtain a difference family 
with parameters $(q;(q-1)/2,(q-1)/2;(q-3)/2)$. 

By generalizing, we say that a difference family $(X_1,X_2)$ 
in $G$ (a finite abelian group of order $v$) having the 
parameter set $(v;(v-1)/2,(v-1)/2;(v-3)/2)$ is a 
{\em Legendre pair}. In the case when $G$ is cyclic, they were 
introduced first in the paper \cite{FGS:2001} where they were 
called ``generalized Legendre pairs". It was shown in the same paper \cite[Theorem 2]{FGS:2001} that such pairs give Hadamard matrices of order $2v+2$. Moreover if one of the blocks, say 
$X_1$, is symmetric or skew then the resulting Hadamard matrix can be made symmetric or skew-Hadamard, respectively, see the 
arrays $H_s$ and $H_k$ below. All these facts remain valid 
over arbitrary finite abelian groups.

Let $(X_1,X_2)$ be a Legendre pair with the above parameter set. 
The parameter $n$ for this parameter set is 
$n=(v-1)-(v-3)/2=(v+1)/2$. By Theorem \ref{thm:compl-fun} the 
associated functions $f_i$ of $X_i$, $i=1,2$, are complementary 
with \paf-constants $\alpha_0=2v$ and $\alpha=2v-4n=-2$.

As nontrivial examples we give two Legendre pairs in the 
non-cyclic group $\bZ_5\times\bZ_5$, which we identify with 
the additive group of the finite field $\bZ_5[x]/(x^2+2)$ 
of order 25. The block $X_1$ is symmetric in the first 
pair and skew in the second pair.

\begin{eqnarray*}
X_1 &=& \{\pm x,\pm 2x,\pm(1+2x),\pm(1+4x),\pm(2+3x),\pm(2+4x)\} \\
X_2 &=& \{1,3,1+2x,2+2x,4+x,4+4x,\pm(1+3x),\pm(3+x),\pm(3+2x)\};\\
\\
X_1 &=& \{1,2,x,2x,1+2x,1+3x,1+4x,2+3x,3+3x,2+4x,3+4x,4+4x\} \\
X_2 &=& \{1,3,4,2x,1+2x,1+4x,2+x,2+3x,3+2x,4+2x,4+3x,4+4x\}.
\end{eqnarray*}

The multicirculants $A_i$, $i=1,2$, associated with the base blocks $X_i$ of the first resp. second family should be plugged into the array $H_s$ resp. $H_k$ below to obtain a symmetric 
resp. skew Hadamard matrix of order 52.

\begin{center}
\begin{equation*}
H_{s} = \left[
\begin{array}{c c |c |c}
-  & -  & +~\cdots ~+ & +~ \cdots ~+ \\
-  & +  & +~\cdots ~+ & -~ \cdots ~- \\
\hline
+  & +  &              &     \\
\vdots  & \vdots & A_1 & A_2 \\
+  & +  &              &     \\
\hline
+  & -  &                &  \\
\vdots  & \vdots & A_2^T & -A_1^T \\
+  & -  &                &  \\
\end{array}
\right], \quad
H_{k} = \left[
\begin{array}{c c |c |c}
+  &  -  & +~\cdots ~+ & +~ \cdots ~+ \\
+  &  +  & +~\cdots ~+ & -~ \cdots ~- \\
\hline
-  & -  &                &        \\
\vdots  & \vdots & A_1 & A_2 \\
-  & -  &                &        \\
\hline
-  & +  &                &     \\
\vdots  & \vdots & -A_2^T   & A_1^T \\
-  & +  &                &     \\
\end{array}
\right] 
\end{equation*}
\end{center}

\section{Goethals-Seidel quadruples $(\alpha=0)$} \label{sec:GS}

In our definition of difference families in a finite abelian 
group $G$ of order $v$, we have the rather unnatural 
restriction that no base block $X_i$ can be $\emptyset$ or $G$.
The usual justification for that restriction is that such 
blocks are trivial. The trivial blocks can be discarded and 
the number, $t$, of base blocks lowered. However in some applications one does not have the freedom of changing the 
parameter $t$. For that reason, it is necessary to allow 
the possibility of trivial blocks in such applications. In this 
section we shall examine one such application. To avoid confusion, we warn the reader that in the remainder of this section we deviate from Definition \ref{def:diff-fam} by 
permitting the base blocks to be trivial. Since we deal with the 
``trivial'' cases, we also permit $G$ to be trivial, i.e., 
we may have $v=1$.

We say that a difference family $(X_0,X_1,X_2,X_3)$ is a 
{\em Goethals-Seidel family} (or quadruple) if its 
parameters $(v;k_0,k_1,k_2,k_3;\lambda)$ and 
$n=\sum k_i -\lambda$ satisfy the additional condition $n=v$.
This additional condition is equivalent to $\alpha=0$, 
see Theorem \ref{thm:compl-fun}. It is also equivalent to

\begin{equation} \label{eq:trivial}
\sum_{i=0}^3 k_i=\lambda+v.
\end{equation}

For convenience, we arrange here the $k_i$ so that
\begin{equation} \label{eq:ineq}
0\le k_0\le k_1\le k_2\le k_3\le v/2.
\end{equation}

By plugging the associated matrices $A_i$ into the well-known Goethals--Seidel array (GS-array)
\begin{equation} \label{GS-matrix}
H=\left[ \begin{array}{cccc}
A_0 & A_1R & A_2R & A_3R \\
-A_1R & A_0 & -A_3^T R & A_2^T R \\
-A_2R & A_3^T R & A_0 & -A_1^T R \\
-A_3R & -A_2^T R & A_1^T R & A_0
\end{array} \right],
\end{equation}
we obtain a Hadamard matrix $H$. This can be easily verified by using the equation (\ref{eq:RAR}).

If some $k_i=0$ then $X_i=\emptyset$ and the corresponding 
matrix block $A_i=J_v$, the all-one matrix of order $v$. 
We are here interested only in the cases when at least one of 
the matrix blocks $A_i$ is equal to $J_v$. As we assume 
that (\ref{eq:ineq}) holds, this means that $k_0=0$. 
Then the equation (\ref{eq:lambda}) takes the form 
\begin{equation} \label{eq:k0=0}
\sum_{i=1}^3 k_i(k_i-1)=\lambda(v-1).
\end{equation}

If $v=1$ then this equation says nothing about $\lambda$
and we shall use equation (\ref{eq:trivial}) to compute 
$\lambda$. The equation (\ref{eq:k0=0}) can be written as
\begin{equation} \label{eq:4-v}
\sum_{i=1}^3 \left( \frac{v}{2}-k_i \right)^2=
\frac{v}{4}(4-v).
\end{equation}
It implies that $v\le4$. Thus there are four cases to 
consider according to the value of $v=1,2,3,4$. In the 
first three cases the group $G$ is necessarily cyclic 
and we shall assume that $G=\bZ_v$.

Case $v=1$. Then (\ref{eq:ineq}) implies that 
$k_1=k_2=k_3=0$. Thus all $X_i=\emptyset$, and all four blocks $A_i=J_1=I_1$, the identity matrix of order 1. 
We can plug these blocks into the 
GS-array to obtain a Hadamard matrix of order 4.
(Note that in this case the equation (\ref{eq:trivial}) 
implies that $\lambda=-1$.)

Case $v=2$. Then the inequalities (\ref{eq:ineq}) and
the equation (\ref{eq:4-v}) imply that $k_1=0$ and 
$k_2=k_3=1$. Now $X_0=X_1=\emptyset$ and we can set 
$X_2=X_3=\{0\}$. Thus $A_0=A_1=J_2$ and 
$A_2=A_3=J_2-2I_2$. We can plug these blocks into the 
GS-array to get a Hadamard matrix of order 8.
(In this case $\lambda=0$.)

Case $v=3$. The inequalities (\ref{eq:ineq}) and
the equation (\ref{eq:4-v}) imply that $k_1=k_2=k_3=1$. 
Now $X_0=\emptyset$ and we can set $X_1=X_2=X_3=\{0\}$. 
The four matrix blocks are $A_0=J_3$ and 
$A_1=A_2=A_3=J_3-2I_3$. This gives a Hadamard matrix of 
order 12. (In this case again $\lambda=0$.)

Case $v=4$. The inequalities (\ref{eq:ineq}) and
the equation (\ref{eq:4-v}) imply that $k_1=k_2=k_3=2$. 
It follows that $\lambda=2$.

If $G=\bZ_4$, a cyclic group, there is a GS-difference 
family with $X_0=\emptyset$ and $X_i=\{0,i\}$ for 
$i=1,2,3$.

There is also a possibility that $G$ is a Klein four-group, 
which we identify with the additive group $\{0,1,x,1+x\}$,
$x^2=1+x$, of the finite field $F_4$ of order 4. The 
required GS-family exists, e.g.
$$
X_0=\emptyset,~X_1=\{0,1\},~X_2=\{0,x\},~X_3=\{0,1+x\}.
$$

For this difference family we give all details for the construction of the corresponding Hadamard matrix. 
If we label the rows and columns of the matrix blocks $A_i$ 
with group elements $0,1,x,1+x$ then $A_0=J_4$ and
$$
A_1=\left[ \begin{array}{cccc}
-&-&+&+\\ -&-&+&+\\ +&+&-&-\\ +&+&-&- \end{array} \right], \quad
A_2=\left[ \begin{array}{cccc}
-&+&-&+\\ +&-&+&-\\ -&+&-&+\\ +&-&+&- \end{array} \right], \quad
A_3=\left[ \begin{array}{cccc}
-&+&+&-\\ +&-&-&+\\ +&-&-&+\\ -&+&+&- \end{array} \right],
$$
where we write $\pm$ insted of $\pm1$.
Since $u+u=0$ for all $u\in G$, we have $\delta_{0,y+z}=1$ if 
and only if $y=z$. Thus $R=I_4$ in this case. For the same reason, each $X_i$ (and each $A_i$) is symmetric. By plugging 
the $A_i$ into the GS-array, we obtain the symmetric Hadamard 
matrix
$$
H=\left[ \begin{array}{cccc}
A_0 & A_1 & A_2 & A_3 \\
A_1 & A_0 & A_3 & A_2 \\
A_2 & A_3 & A_0 & A_1 \\
A_3 & A_2 & A_1 & A_0 \end{array} \right].
$$
It is easy to check that $A_0^2=4A_0$ and $A_i^2=-4A_i$ for 
$i\ne 0$. Further we have $A_iA_j=0$ whenever $i\ne j$ and 
$A_0-A_1-A_2-A_3=4I_4$. It is now easy to verify that $H$ is 
indeed a Hadamard matrix.

This $H$ is an example of a Hadamard matrix of Bush-type 
(see \cite{CK-Had:2007}) because 
its order is a square $4v=m^2$, when partitioned into blocks 
of size $m$, the diagonal blocks are all equal to $J_m$ and each 
off-diagonal block has all row and column sums 0.

In the case $G=\bZ_4$ we also obtain a Bush-type Hadamard matrix 
of order 16. (The details are left to the reader.)

\end{document}